\newcommand*{\circled}[1]{\lower.7ex\hbox{\tikz\draw (0pt, 0pt)%
    circle (.5em) node {\makebox[1em][c]{\small #1}};}}
\begin{document}

\renewcommand{\thefootnote}{\fnsymbol{footnote}}

\newtheorem{theorem}{Theorem}[section]
\newtheorem{corollary}[theorem]{Corollary}
\newtheorem{definition}[theorem]{Definition}
\newtheorem{conjecture}[theorem]{Conjecture}
\newtheorem{question}[theorem]{Question}
\newtheorem{lemma}[theorem]{Lemma}
\newtheorem{proposition}[theorem]{Proposition}
\newtheorem{example}[theorem]{Example}
\newenvironment{proof}{\noindent {\bf Proof.}}{\hfill\rule{2mm}{2mm}\par\medskip}

\newcommand{\ec}{{\rm ecc}}

\setcounter{page}{1}
\pagestyle{empty}
\pagestyle{headings}

\title{The Steiner $k$-eccentricity on trees
\thanks{This work was supported by the National Natural Science Foundation of China (11861019), Guizhou Talent Development Project in Science and Technology (KY[2018]046), Natural Science Foundation of Guizhou ([2019]1047, [2020]1Z001), Foundation of Guizhou University of Finance and Economics(2019XJC04). Sandi Klav\v zar acknowledges the financial support from the Slovenian Research Agency (research core funding No.\ P1-0297 and projects J1-9109, J1-1693, N1-0095, N1-0108. 
}}


\author{Xingfu Li$^a$, Guihai Yu$^a$, Sandi Klav\v{z}ar$^{b,c,d}$, Jie Hu$^a$, Bo Li$^a$\\
{\small  $^a$ College of Big Data Statistics, Guizhou University of Finance and Economics}\\
 {\small Guiyang, Guizhou, 550025, China}\\
{\small  {\tt xingfulisdu@qq.com; yuguihai@mail.gufe.edu.cn}}\\
{\small  {\tt jason.houu@gmail.com; hn.libo@163.com}}\\
{\small  $^b$ Faculty of Mathematics and Physics, University of Ljubljana, Slovenia}\\
{\small $^{c}$ Institute of Mathematics, Physics and Mechanics, Ljubljana, Slovenia} \\
{\small $^{d}$ Faculty of Natural Sciences and Mathematics, University of Maribor, Slovenia}\\
{\small {\tt sandi.klavzar@fmf.uni-lj.si}}
 }

\maketitle

\begin{abstract}
    We study  the Steiner $k$-eccentricity on trees, which generalizes the previous one in the paper [X.~Li, G.~Yu, S.~Klav\v{z}ar, On the average Steiner 3-eccentricity of trees, arXiv:2005.10319, 2020].  To support the algorithm, we achieve much stronger properties for the Steiner $k$-ecc tree than that in the previous paper.  Based on this,  a linear time algorithm is devised to calculate the Steiner $k$-eccentricity of a vertex in a tree.  On the other hand,  the lower and upper bounds of the average Steiner $k$-eccentricity index of a tree on order $n$ are established based on a novel technique which is quite different from that in the previous paper but much easier to follow.
\end{abstract}

\medskip\noindent
\textbf{Keywords:} Steiner distance, Steiner tree, Steiner eccentricity, graph algorithms

\medskip\noindent
\textbf{AMS Math.\ Subj.\ Class.\ (2020)}: 05C12, 05C05, 05C85

\section{Introduction}

In this paper we consider connected, simple, undirected graphs $G = (V(G), E(G))$. For basic graph notation and terminology we follow the book of West~\cite{west-2001}, while for algorithmic and computational terminology we use~\cite{Cormen2001,Garey1979}.

The standard distance $d_{G}(u,v)$ between vertices $u$ and $v$ in graph $G$ is the length of a shortest path between $u$ and $v$ in $G$. If $S\subseteq V(G)$, $|S|\ge 2$, then the {\em Steiner distance} $d_G(S)$ is the minimum size among all connected subgraphs of $G$ containing $S$, that is,
$$d_G(S) = \min\{|E(T)|: T\ {\rm is\ a\ subtree\ of}\ G, S\subseteq V(T)\}\,.$$
Note that if $S = \{u,v\}$, then $d_G(S) = d_{G}(u,v)$. If $k\ge 1$, then the \emph{Steiner $k$-eccentricity} of a vertex $v$ in graph $G$ is
$$\ec_{k}(v,G) = \max \{d_{G}(S):\ v\in S\subseteq V(G), |S|=k\}\,.$$
Note that, by definition, $\ec_{1}(v,G) = 0$. $S\subseteq V(G)$ is a {\em Steiner $k$-ecc $v$-set} if $|S|=k$, $v\in S$, and $d_{G}(S)=\ec_{k}(v,G)$. A corresponding minimum Steiner tree $T$ is called a {\em Steiner $k$-ecc $v$-tree} (corresponding to the $k$-set $S$). We will also shorty say that $T$ is a $MST(S,G)$. The average Steiner $k$-eccentricity of a graph $G$ is the mean value of all vertices' Steiner $k$-eccentricities in $G$, that is,
$${\rm aecc}_{k}(G) = \frac{1}{|V(G)|} \sum_{v\in V(G)} \ec_{k}(v,G)\,,$$
which is an extention of the average eccentricity of a graph~\cite{dankelmann2004, dankelmann2014}.

The Steiner tree problem is NP-hard on general graphs~\cite{Garey1979,Hwang1992}, but it can be solved in polynomial time on trees~ \cite{Beineke1996}. The Steiner distance on some special graph classes such as trees, joins, Corona products, threshold and product graphs, has been studied in~\cite{Anand2012,Chartrand1989,Gologranc,Mao2018,Wang2017}.  The average Steiner $k$-distance is closely related to the $k$-th Steiner Wiener index. Both of them were studied on trees, complete graphs, paths, cycles and complete bipartite graphs~\cite{Dankelmann1996,gutman-2017}. The average Steiner distance and the Steiner Wiener index were investigated in~\cite{Dankelmann1997,Li2016Wiener,lu}, while for some work on the Steiner diameter see~\cite{Mao2018,Wang2017}. The Steiner $k$-diameter was compared with the Steiner $k$-radius in~\cite{henning-1990, reiswig-2020}. Closely related invariants were also studied, for instance Steiner Gutman index~\cite{mao2018}, Steiner degree distance~\cite{Gutman2016}, Steiner hyper-Wiener index~\cite{Tratnik2019}, multi-center Wiener indices~\cite{Gutman2015}, and Steiner (revised) Szeged index~\cite{ghorbani}. We especiall point to the substantial survey~\cite{Mao2017Survey} on the Steiner distance and related results and to the recent investigation of isometric subgraphs for Steiner distance~\cite{weisauer-2020}.

Very recently, the Steiner $3$-eccentricity of trees was investigated in~\cite{Li2020}. A linear-time algorithm was developed to calculate the Steiner $3$-eccentricity of a vertex in a tree, and lower and upper bounds for the average Steiner $3$-eccentricity index on trees were derived. In this paper we extend these results to arbitrary $k\ge 2$. In the next section we propose  a linear algorithm to calculate the Steiner $k$-eccentricity of a vertex in a tree. In Section~\ref{bounds} we establish lower and upper bounds of the average Steiner $k$-eccentricity on trees. We conclude this paper by presenting several possibilities for future work.

\section{Steiner $k$-eccentricity of vertices in trees}
\label{algorithms}

The techniques from~\cite{Li2020} that enabled to calculate the Steiner $3$-eccentricity in a tree are not suitable for calculating the Steiner $k$-eccentricity of a vertex in a tree for arbitrary $k\ge 2$. In this section we establish new, stronger structural properties for the Steiner $k$-ecc $v$-tree for a vertex $v$ in a tree, and then apply them to devise a linear time algorithm to calculate the Steiner $k$-eccentricity of a vertex in a tree.

\subsection{Two key structural properties}

Before stating the two properties, let us introduce some notation and terminology on trees. A vertex of a tree of degree at least $3$ is a {\em branching vertex}. Let $L(T)$ denote the set of pendent vertices (leaves) of a tree $T$. If $u$ and $v$ are vertices of a tree $T$, then we will denote the (unique) $u.v$-path in $T$ by $P(u,v,T)$. Given a vertex $v\in V(T)$ and a leaf $u\in L(T)$, let $w$ be the nearest branching vertex to $u$ on $P(v,u,T)$. If there is no branching vertex on $P(v,u,T)$, we set $w=v$. Then we say that the sub-path $P(w,u,T)$ of $P(v,u,T)$ is a \emph{quasi-pendent path (with respect to $u$ and $v$)}.

In the rest we will use the following earlier lemma, also without explicitly mentioning it.

\begin{lemma} {\rm \cite[Lemmas 2.4, 2.5]{Li2020}}
\label{lem:earlier}
If $T$ is a tree and $v\in V(T)$, then the following holds.

(i) If $k > |L(T)|$, then every $k$-ecc $v$-set contains all the leaves of $T$. The same conclusion holds if $v$ is a leaf and $k = |L(T)|$.

(ii) If $2\le k \le |L(T)|$ and $S$ is a $k$-ecc $v$-set, then every vertex from $S\setminus \{v\}$ is a leaf of $T$.
\end{lemma}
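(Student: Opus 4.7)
The plan is to prove both parts by a single exchange argument: assuming a $k$-ecc $v$-set $S$ violates the stated conclusion, I will construct $S' = (S \setminus \{u\}) \cup \{\ell\}$ with $v \in S'$, $|S'| = k$, and $d_T(S') > d_T(S)$, contradicting $d_T(S) = \ec_k(v,T)$. Writing $T_S$ for the minimum Steiner tree of $S$ in $T$, the preliminary observation to record is that every leaf $\ell$ of $T$ with $\ell \notin S$ actually lies outside $V(T_S)$: the leaves of $T_S$ are always contained in $S$, and a leaf of $T$ sitting inside $T_S$ must have $T_S$-degree one. The main tool is the standard identity $d_T(A \cup \{x\}) = d_T(A) + d_T(x, V(T_A))$, valid whenever $x \notin V(T_A)$.

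The first step is to locate a non-leaf $u \in S \setminus \{v\}$ of $T$. For (ii) this is given; for (i) a short count provides one (the missing leaf leaves enough slack in $|S \setminus L(T)|$), and the borderline case in (i) with $v$ a leaf and $k = |L(T)|$ is handled by the same count. The argument then splits on the position of $u$ inside $T_S$.

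If $u$ is a leaf of $T_S$, then since $u$ is not a leaf of $T$ some branch at $u$ lies in $T \setminus V(T_S)$, and I choose $\ell$ to be any leaf of $T$ reached along that branch. Letting $w$ be the first vertex of $S \setminus \{u\}$ or of $T_S$-degree at least three encountered on the $u$-path in $T_S$, pruning that pendant yields $d_T(S \setminus \{u\}) = d_T(S) - d_T(u,w)$, while the path in $T$ from $\ell$ to $V(T_{S \setminus \{u\}})$ is forced to route through $u$ and $w$, contributing $d_T(\ell,u) + d_T(u,w)$. Summing, $d_T(S') = d_T(S) + d_T(\ell,u) > d_T(S)$. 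If instead $u$ is internal to $T_S$, then $T_{S \setminus \{u\}} = T_S$ and any leaf $\ell$ of $T$ outside $V(T_S)$ suffices; were no such leaf to exist, the preliminary observation would force $L(T) \subseteq S$, contradicting either the missing leaf in (i) or, in (ii), forcing $L(T) = S$ (since $|L(T)| \geq |S|$) in conflict with $u \in S \setminus L(T)$.

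The step I expect to be the main obstacle is securing strictness in the $T_S$-leaf case. If I chose an arbitrary $\ell \in L(T) \setminus S$, it might sit elsewhere in $T$ and yield a gain $d_T(\ell, V(T_{S \setminus \{u\}}))$ smaller than the pruning savings $d_T(u,w)$, so the exchange could fail to increase, or even decrease, $d_T(S)$. Targeting $\ell$ on the branch hanging off $u$ outside $T_S$ is exactly what aligns the two contributions and leaves the clean net gain $d_T(\ell,u) \geq 1$.
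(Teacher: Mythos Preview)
The paper does not prove this lemma at all; it is quoted verbatim from \cite{Li2020} and used as a black box. So there is nothing to compare against, and the question is simply whether your argument is sound. It is.

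Your exchange argument is the natural one and all the details check out. A couple of points worth recording explicitly, since your write-up gestures at them rather than spelling them out:
\begin{itemize}
\item In Case A (where $u$ is a leaf of $T_S$), the reason the neighbour $u'$ of $u$ outside $E(T_S)$ actually lies outside $V(T_S)$ is that otherwise $T$ would contain two distinct $u,u'$-paths. This is what guarantees the ``branch'' you follow to $\ell$ is genuinely disjoint from $T_S$, and hence that the $\ell$--$T_{S\setminus\{u\}}$ path in $T$ is forced through $u$ and then along the pruned pendant to $w$.
\item The identity $d_T(S\setminus\{u\}) = d_T(S) - d_T(u,w)$ relies on the pruned tree having all its leaves in $S\setminus\{u\}$; your definition of $w$ (first vertex in $S\setminus\{u\}$ or first $T_S$-branching vertex) ensures exactly this.
\item In Case B you implicitly use $k\ge 3$, which is automatic: if $|S|=2$ then $T_S$ is a path and both elements of $S$ are its leaves, so $u$ cannot be internal.
\end{itemize}
With these observations the computations $d_T(S') = d_T(S) + d_T(\ell,u)$ in Case~A and $d_T(S') = d_T(S) + d_T(\ell,V(T_S))$ in Case~B are justified, and strictness follows since $\ell\notin V(T_S)$ in both cases. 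Your final paragraph correctly identifies the one place where a careless choice of $\ell$ would break the argument.
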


For our first structural result, we need one more lemma.

\begin{lemma}\label{shared-path}
Let $k\ge 2$, let $v$ be a vertex of a tree $T$, let $T_{v}^{k}$ be a Steiner $k$-ecc $v$-tree, and let $T_{v}^{k-1}$ be a Steiner $(k-1)$-ecc $v$-tree. Then there exists a leaf $u\in L(T_{v}^{k})\setminus L(T_{v}^{k-1})$ such that the quasi-pendent path $P(w,u,T_{v}^{k})$ has no common edge with $T_{v}^{k-1}$.
\end{lemma}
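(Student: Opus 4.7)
The plan is to argue by contradiction, pivoting on the common subtree $T'' := T_v^{k} \cap T_v^{k-1}$, which is itself a subtree of $T$ containing $v$ because for any two vertices common to $T_v^{k}$ and $T_v^{k-1}$, the $T$-path between them lies in both subtrees. The case in which $v$ is a leaf of $T_v^{k}$ but not of $T_v^{k-1}$ is immediate, since then $u = v$ yields a degenerate, edge-free quasi-pendent path; I therefore assume we are not in this trivial case.

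Suppose, for contradiction, that every $u \in A := L(T_v^{k}) \setminus L(T_v^{k-1})$ has $P(w,u,T_v^{k})$ sharing at least one edge with $T_v^{k-1}$. For each such $u$, let $z_u$ be the last vertex of $V(T_v^{k-1})$ met while traversing $P(v,u,T_v^{k})$ from $v$ to $u$; equivalently, $z_u$ is the unique anchor in $V(T'')$ of the component $C(u)$ of $T_v^{k} - V(T'')$ containing $u$. The overlap hypothesis forces $w$ to lie strictly before $z_u$ on this path, and since no branching vertex of $T_v^{k}$ can sit strictly between $w$ and $u$, the vertex $z_u$ itself is not a branching vertex of $T_v^{k}$. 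One can then deduce that $z_u$ is a leaf of $T''$, that $C(u)$ is a pendant path, and that $u$ is the only leaf of $T_v^{k}$ inside $C(u)$. In particular the correspondence $u \mapsto z_u$ is injective on $A$.

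I next locate a leaf of $T_v^{k-1}$ in $V(T_v^{k-1}) \setminus V(T'')$ for each $u \in A$. Since $z_u$ lies strictly between $v$ and $u$ on $P(v,u,T)$, it is internal in $T$, so by Lemma~\ref{lem:earlier}(ii) it cannot lie in $S_{k-1} \setminus \{v\}$. Minimality of $T_v^{k-1}$ as a Steiner tree then rules out $z_u$ being a leaf of $T_v^{k-1}$ except possibly when $z_u = v$, which is incompatible with $w$ lying strictly before $z_u$. Hence $z_u$ has degree at least two in $T_v^{k-1}$; its only $T''$-neighbor being the predecessor on $P(v,u,T)$, some further neighbor of $z_u$ must lie in $V(T_v^{k-1}) \setminus V(T'')$. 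Following that branch within $T_v^{k-1}$ down to a leaf gives $s_u \in L(T_v^{k-1}) \setminus V(T_v^{k})$, which combined with Lemma~\ref{lem:earlier}(ii) places $s_u$ in $A' \setminus \{v\}$, where $A' := L(T_v^{k-1}) \setminus L(T_v^{k})$. Since distinct $z_u$ anchor distinct components of $T_v^{k-1} - V(T'')$, the map $u \mapsto s_u$ is an injection $A \hookrightarrow A' \setminus \{v\}$.

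A direct leaf count then produces the contradiction. Applying Lemma~\ref{lem:earlier}(ii) to $S_k$ and $S_{k-1}$ gives $|L(T_v^{k})| - |L(T_v^{k-1})| = 1$ whenever $v$ is a leaf of both or of neither tree, and $|L(T_v^{k})| - |L(T_v^{k-1})| = 0$ in the remaining subcase where $v$ is a leaf of $T_v^{k-1}$ but not of $T_v^{k}$ (so $v \in A'$). In the first case $|A| = |A'| + 1$ conflicts with the injection $|A| \le |A'|$; in the second case $|A' \setminus \{v\}| = |A'| - 1 = |A| - 1$, again contradicting the injection. The main obstacle I anticipate is the structural analysis that identifies $z_u$ as a leaf of $T''$ and $C(u)$ as a pendant path, together with the bookkeeping around $v$ — in particular ruling out $z_u = v$ under the overlap hypothesis and matching $|A|$ to $|A'|$ across the remaining subcases.
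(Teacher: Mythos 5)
Your argument is correct and is essentially the paper's: under the negation you assign to each leaf in $L(T_v^{k})\setminus L(T_v^{k-1})$ a private leaf of $T_v^{k-1}$ lying outside $T_v^{k}$ and reach a contradiction by counting leaves, which is precisely the paper's (much terser) sketch. The difference is only one of rigor: you actually construct the injection via the anchor vertices $z_u$ and track whether $v$ is a leaf of either tree, details the paper's cruder count (``$|L(T_v^{k-1})|\ge |L(T_v^{k})|$ versus one fewer terminal'') silently skips --- and your refined target $A'\setminus\{v\}$ is in fact needed to close the subcase where both trees have $k-1$ leaves.
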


\begin{proof}
If $k=2$, then $T_{v}^{1}$ is a tree on a single vertex $v$, hence the conclusion is clear. Assume in the rest that $k\ge 3$ and suppose on the contrary that every leaf $u\in L(T_{v}^{k})\setminus L(T_{v}^{k-1})$ satisfies that the quasi-pendant path $P(w,u,T_{v}^{k})$ has common edges with $T_{v}^{k-1}$. Then to every leaf $u\in L(T_{v}^{k})$ we can associate its private leaf of $L(T_{v}^{k-1})$. Hence the number of leaves in $T_{v}^{k-1}$ is not less than that in  $T_{v}^{k}$. This contradicts the fact (by Lemma~\ref{lem:earlier}) that the Steiner $(k-1)$-ecc $v$-set corresponding to $T_{v}^{k-1}$ has one less element than the Steiner $k$-ecc $v$-set corresponding to $T_{v}^{k}$.
\end{proof}

\begin{theorem}\label{k-ecc-tree-preserve}
Let $k\geq 2$, and let $v$ be a vertex of a tree $T$. Then every Steiner $k$-ecc $v$-tree contains some Steiner $(k-1)$-ecc $v$-tree.
\end{theorem}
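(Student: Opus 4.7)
The plan is to construct an explicit Steiner $(k-1)$-ecc $v$-tree inside the given $T_v^k$ by trimming a single quasi-pendent path. Fix an arbitrary Steiner $k$-ecc $v$-tree $T_v^k$ corresponding to a $k$-set $S_k$, and an arbitrary Steiner $(k-1)$-ecc $v$-tree $T_v^{k-1}$ corresponding to a $(k-1)$-set $S_{k-1}$. (If $k>|L(T)|$ then $T_v^k=T$ and the theorem is trivial, so I assume $k\le|L(T)|$.) Applying Lemma~\ref{shared-path} to this pair yields a leaf $u\in L(T_v^k)\setminus L(T_v^{k-1})$ and the associated vertex $w$ on $P(v,u,T_v^k)$ such that the quasi-pendent path $P(w,u,T_v^k)$ shares no edge with $T_v^{k-1}$. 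Let $T^*$ be obtained from $T_v^k$ by deleting $u$ together with all internal vertices of $P(w,u,T_v^k)$; the goal is to show that the subtree $T^*\subseteq T_v^k$ is itself a Steiner $(k-1)$-ecc $v$-tree.

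I would first check that $T^*$ is the minimum Steiner tree in $T$ for $S^*=S_k\setminus\{u\}$. Since $w$ is either a branching vertex of $T_v^k$ or coincides with $v$, its degree in $T^*$ drops by exactly $1$, so $L(T^*)\subseteq (L(T_v^k)\setminus\{u\})\cup\{v\}\subseteq S^*$ (the extra $v$ accounts for the case where $w=v$ had degree $2$ in $T_v^k$). Because in a tree the minimum Steiner tree for a vertex set is precisely the minimal subtree containing it --- equivalently, the subtree whose leaves all lie in the set --- this gives $d_T(S^*)=|E(T^*)|=\ec_k(v,T)-|E(P(w,u,T_v^k))|$. The inequality $d_T(S^*)\le\ec_{k-1}(v,T)$ is then automatic from $v\in S^*$ and $|S^*|=k-1$.

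The main task is the reverse inequality, equivalently $\ec_k(v,T)\ge\ec_{k-1}(v,T)+|E(P(w,u,T_v^k))|$. To prove it I would consider the $k$-set $S_{k-1}\cup\{u\}$. By Lemma~\ref{lem:earlier}(ii), $u$ is a leaf of $T$, so combined with $u\notin L(T_v^{k-1})$ it must satisfy $u\notin V(T_v^{k-1})$; hence $|S_{k-1}\cup\{u\}|=k$ and therefore $d_T(S_{k-1}\cup\{u\})\le\ec_k(v,T)$. The minimum Steiner tree in $T$ for $S_{k-1}\cup\{u\}$ is $T_v^{k-1}$ augmented by the unique $T$-path from $u$ to the nearest vertex of $V(T_v^{k-1})$, giving $d_T(S_{k-1}\cup\{u\})=\ec_{k-1}(v,T)+d_T(u,V(T_v^{k-1}))$. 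It therefore suffices to show $d_T(u,V(T_v^{k-1}))\ge|E(P(w,u,T_v^k))|$.

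The crux --- and the step I expect to be the main obstacle --- is to show that no vertex of $P(w,u,T_v^k)$ strictly between $w$ and $u$ lies in $V(T_v^{k-1})$. If some internal vertex $p$ of that path did belong to $V(T_v^{k-1})$, then because $v\in V(T_v^{k-1})$ and $T_v^{k-1}$ is a subtree of the tree $T$, the unique $p.v$-path in $T$ would be contained in $T_v^{k-1}$; but that path traverses the segment of $P(w,u,T_v^k)$ from $p$ to $w$, contradicting the no-common-edge conclusion of Lemma~\ref{shared-path}. Hence the shortest $T$-path from $u$ to $V(T_v^{k-1})$ must traverse all of $P(w,u,T_v^k)$, yielding the desired distance bound. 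Combining the two bounds gives $d_T(S^*)=\ec_{k-1}(v,T)$, so $T^*\subseteq T_v^k$ is a Steiner $(k-1)$-ecc $v$-tree, which proves the theorem.
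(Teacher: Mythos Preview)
Your proof is correct and follows essentially the same route as the paper: both pick the leaf $u$ via Lemma~\ref{shared-path}, remove the quasi-pendant path to form the candidate $(k-1)$-tree $T^*=T_1$, and compare against the $k$-set $S_{k-1}\cup\{u\}=S_2$ to obtain the size inequality $|E(T_2)|\ge |E(T_v^{k-1})|+|E(P(w,u,T_v^k))|$. The only difference is packaging: the paper argues by contradiction (assuming $|E(T_1)|<|E(T_v^{k-1})|$ and deriving $|E(T_2)|>|E(T_v^k)|$), whereas you argue directly, combining $d_T(S^*)\le\ec_{k-1}(v,T)$ with $\ec_k(v,T)\ge\ec_{k-1}(v,T)+|E(P(w,u,T_v^k))|$ to force equality and thereby explicitly exhibit $T^*$ as the desired Steiner $(k-1)$-ecc $v$-tree --- a slightly more constructive and informative presentation of the same argument.
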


\begin{proof}
The case $k=2$ is trivial, hence assume in the rest that $k\ge 3$. Let $T_{v}^{k}$ be a Steiner $k$-ecc $v$-tree and suppose on the contrary that it  contains no Steiner $(k-1)$-ecc $v$-tree. If $T_{v}^{k-1}$ is an arbitrary Steiner $(k-1)$-ecc $v$-tree, then, by Lemma~\ref{shared-path}, we may select a leaf $u$ from $T_{v}^{k}$ such that the quasi-pendant path $P(w,u,T_{v}^{k})$ does not have common edges with $T_{v}^{k-1}$.

Let $S_{v}^{k}$ be the Steiner $k$-ecc $v$-set corresponding to $T_{v}^{k}$ and set $S_{1}=S_{v}^{k}\setminus\{u\}$. Then $S_{1}$ is a ($k-1$)-set containing the vertex $v$. Moreover, the tree $T_{1}=T_{v}^{k}\setminus (P(w,u,T_{v}^{k})\setminus\{w\})$ is a $MST(S_1, T)$. By the assumption, the size of $T_{1}$ is strictly  less than that of $T_{v}^{k-1}$, that is,
\begin{equation}\label{T-1}
|E(T_{1})|<|E(T_{v}^{k-1})|\,.
\end{equation}

Let $S_{2}=S_{v}^{k-1}\cup \{u\}$, where $S_{v}^{k-1}$ is the Steiner ($k-1$)-ecc $v$-set corresponding to the tree $T_{v}^{k-1}$. Then $S_{2}$ is a $k$-set which contains the vertex $v$. Let $T_{2}$ be a $MST(S_{2},T)$. In the following we are going to show that the size of $T_{2}$ is larger than that of $T_{v}^{k}$.

Since the quasi-pendant path $P(w,u,T_{v}^{k})$ does not share any edge with $T_{v}^{k-1}$ and must be a sub-path of the quasi-pendant path $P(w',u,T_{2})$,  the size of $T_{2}$ satisfies
\begin{equation}\label{T-2}
|E(T_{2})|\geq |E(T_{v}^{k-1})|+|E(P(w,u,T_{v}^{k}))|\,.
\end{equation}
Combining~\eqref{T-1} and \eqref{T-2} we obtain that
\begin{align}\label{contra}
|E(T_{2})|  &\geq |E(T_{v}^{k-1})|+|E(P(w,u,T_{v}^{k}))|\nonumber\\
           &> |E(T_{1})|+|E(P(w,u,T_{v}^{k}))\nonumber\\
           &=|E(T_{v}^{k})|\,.
\end{align}
Hence $|E(T_{2})|>|E(T_{v}^{k})|$. Since $T_{2}$ is a minimum Steiner tree on a $k$-set containing $v$,~\eqref{contra} contradicts the fact that $T_{v}^{k}$ is a Steiner $k$-ecc $v$-tree.
\end{proof}

Theorem~\ref{k-ecc-tree-preserve} thus asserts that a Steiner $k$-ecc $v$-tree contains some Steiner $(k-1)$-ecc $v$-tree. The question now is, how to determine such a Steiner $(k-1)$-ecc $v$-tree. The message of the next result is that for our purposes, any Steiner $(k-1)$-ecc $v$-tree will do it. Before stating the theorem, we need some more notation. If $H$ is a subgraph of a graph $G$, and $v\in V(G)$, then the distance from $v$ to $H$ is $d_G(v,H) = \min\{d_{G}(v,u):u\in V(H)\}$. The eccentricity of $H$ in $G$ is $\ec_G(H) = \max\{d_G(v,H):v\in V(G)\}$.

\begin{theorem}\label{longest-path-no-matter}
Let $k\ge 1$, and let $v$ be a vertex of a tree $T$. If $T_{1}$ and $T_{2}$ are Steiner $k$-ecc $v$-trees of $T$,  then  $\ec_T(T_{1})= \ec_T(T_{2})$.
\end{theorem}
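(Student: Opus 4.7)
The plan is a proof by contradiction, passing to the $(k+1)$-level via Theorem~\ref{k-ecc-tree-preserve} and then completing with a swap argument based on Lemma~\ref{shared-path}. The case $k=1$ is immediate since $T_1=T_2=\{v\}$; assume $k\geq 2$. First I would establish a uniform upper bound: for any Steiner $k$-ecc $v$-tree $T_v^k$, one has $\ec_T(T_v^k)\leq D$, where $D:=\ec_{k+1}(v,T)-\ec_k(v,T)$. Indeed, choose a leaf $u\in L(T)\setminus V(T_v^k)$ achieving $d_T(u,T_v^k)=\ec_T(T_v^k)$ (the eccentricity of a subtree in a tree is always attained at a leaf of $T$); then $S_v^k\cup\{u\}$ is a $(k+1)$-set containing $v$ whose unique minimum Steiner tree in $T$ is $T_v^k\cup P(y,u,T)$, of size $\ec_k(v,T)+\ec_T(T_v^k)\leq\ec_{k+1}(v,T)$. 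Conversely, Theorem~\ref{k-ecc-tree-preserve} applied at level $k+1$ yields a Steiner $(k+1)$-ecc $v$-tree $T^{k+1}=\hat T\cup \hat P$ with $\hat T$ a Steiner $k$-ecc $v$-tree and $\hat P$ a quasi-pendant path of length exactly $D$; its endpoint certifies $\ec_T(\hat T)=D$, so the bound is attained.

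The core step is showing $\ec_T(T_v^k)=D$ for \emph{every} Steiner $k$-ecc $v$-tree, not only for $\hat T$. Suppose for contradiction that some Steiner $k$-ecc $v$-tree $T_2$ has $\ec_T(T_2)<D$. I would apply Lemma~\ref{shared-path} to the pair $(T^{k+1},T_2)$, viewed as Steiner $(k+1)$- and $k$-ecc $v$-trees, to obtain a leaf $u'\in L(T^{k+1})\setminus L(T_2)$ whose quasi-pendant path $P(w',u',T^{k+1})$ is edge-disjoint from $T_2$. Using this edge-disjoint witness path, I would perform a swap: remove from $T_2$ the quasi-pendant path of a suitably chosen leaf $u_2$ and attach the witness path, producing a new $k$-set $S_2'=(S_2\setminus\{u_2\})\cup\{u'\}$ whose Steiner distance in $T$ exceeds $\ec_k(v,T)$, contradicting the maximality of $T_2$. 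Applying the conclusion to both $T_1$ and $T_2$ then gives $\ec_T(T_1)=\ec_T(T_2)=D$.

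The main obstacle is implementing the swap cleanly: one must identify $u_2\in L(T_2)$ whose quasi-pendant path in $T_2$ is strictly shorter than the length of the added path from $u'$ into the pruned $T_2$. The strict inequality $\ec_T(T_2)<D$ should provide the required slack, since the witness path carries an effective excess of $D-\ec_T(T_2)$ beyond what $T_2$ reaches, and the edge-disjointness from Lemma~\ref{shared-path} ensures no double-counting when attaching. A cleaner alternative would be induction on $k$: the inductive hypothesis gives $\ec_T(R_1)=\ec_T(R_2)$ for Steiner $(k-1)$-ecc $v$-subtrees $R_i\subset T_i$ produced by Theorem~\ref{k-ecc-tree-preserve}, reducing the comparison of $\ec_T(T_1)$ and $\ec_T(T_2)$ to a comparison of ``second-max'' hanging-subtree heights plus a divergence-point analysis of two maximum-length paths within the same hanging subtree of $T$.
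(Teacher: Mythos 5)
Your reduction of the theorem to the single identity $\ec_T(T_v^k)=D$ with $D=\ec_{k+1}(v,T)-\ec_k(v,T)$ is a nice reframing, and the two bookends are essentially sound: the upper bound $\ec_T(T_v^k)\le D$ follows correctly from augmenting $S_v^k$ by a farthest vertex, and one can indeed show that the Steiner $(k-1)$-ecc (here: $k$-ecc) subtree $\hat T$ provided by Theorem~\ref{k-ecc-tree-preserve} satisfies $T^{k+1}=\hat T\cup(\text{a single pendant path of length }D)$, hence $\ec_T(\hat T)=D$ (though even this needs an argument you omit, namely that $\hat S\subseteq S^{k+1}$, so that the difference is one path rather than several shorter ones). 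The problem is that these two facts only establish the conclusion for the particular trees $\hat T$ arising inside some $(k+1)$-ecc tree; Theorem~\ref{k-ecc-tree-preserve} gives containment in the wrong direction for your purposes, since it does not say that an arbitrary Steiner $k$-ecc $v$-tree $T_2$ extends to a Steiner $(k+1)$-ecc $v$-tree. That extension property is essentially equivalent to the theorem you are trying to prove, so the entire content is concentrated in your ``core step,'' which is where the genuine gap lies.

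Concretely, the swap needs the inequality $d_T\bigl(u',MST(S_2\setminus\{u_2\},T)\bigr)>\ell(u_2)$, where $\ell(u_2)$ is the length of the quasi-pendant path of $u_2$ in $T_2$ (equivalently, $d_T(u',T_2)$ must exceed the quasi-pendant length of $u_2$ in the augmented tree $T_2\cup P(\cdot,u',T)$). You assert that the slack $D-\ec_T(T_2)>0$ supplies this, but that quantity never enters the inequality: the hypothesis $\ec_T(T_2)<D$ is an \emph{upper} bound on $d_T(u',T_2)$ for every $u'$, which pushes in the wrong direction, and Lemma~\ref{shared-path} only guarantees edge-disjointness of the witness path from $T_2$, not any lower bound on its length relative to the quasi-pendant paths of $T_2$. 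You also never specify how to choose $u_2$, and a bad choice (one whose quasi-pendant path lies on the route from $u'$ into $T_2$) changes the attachment point and invalidates the count. The paper's proof closes exactly this hole by a different mechanism: it compares the two trees directly at level $k$, takes $P_1$ to be a \emph{longest} path to $T_1$, derives the key strict inequality $|E(P(u_2,w_0,T))|>|E(P(w_0,t,T_2))|$ from that maximality together with $u_1\notin T_1\cap T_2$, and then handles the two possible positions of the branching vertex $w$ separately. Some ingredient playing this role (a maximality-based lower bound on the inserted path against a specific, correctly located deleted leaf) is missing from your proposal, and the ``cleaner alternative'' by induction is only named, not argued; as written the proof does not go through.
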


\begin{proof}
There is nothing to be proved if $T_1 = T_2$. Hence assume in the rest that  $T_{1}$ and $T_{2}$ are different Steiner $k$-ecc $v$-trees of $T$. If $k=1$, then a (unique) Steiner $1$-ecc $v$-tree is induced by the vertex $v$ itself. Since all longest paths starting from $v$ have the same length, the assertion of the theorem is clear for $k=1$. Hence we may also assume in the rest of the proof that $k\ge 2$.

Let $P_{1}$ and $P_{2}$  be longest paths from vertices of $V(T)$ to trees $T_{1}$ and $T_{2}$, respectively. Let
$u_{1}$ and $u_{2}$ be the two endpoints of $P_{1}$ with $u_{1}\in V(T_1)$, and let $w_{1}$ and $w_{2}$ be the two endpoints of $P_{2}$ with $w_{1}\in V(T_{2})$. Set $T_{0}=T_{1}\cap T_{2}$. To prove the theorem it suffices to  prove that  $u_{1}\in V(T_{0})$ and $w_{1}\in V(T_{0})$. By symmetry, it suffices to prove the first assertion, that is, $u_{1}\in V(T_{0})$.

Suppose on the contrary that $u_{1}\in V(T_{1})\setminus V(T_{0})$. Let $s$ be a leaf of $T_{1}$ such that $u_{1}$ is on the path $P(v,s, T_{1})$. Then there must be a vertex $w_{0}\in V(T_{0})$ and a leaf $t$ of $T_{2}$ such that $E(P(w_{0},s,T_{1}))\cap E(P(w_{0},t,T_{2}))=\emptyset$, see Fig. \ref{proof-lemma-2-3}. Note that $w_{0}$ may be the vertex $v$.

\begin{figure}[ht!]
\vspace{0.5cm}
\begin{center}
\epsfig{file=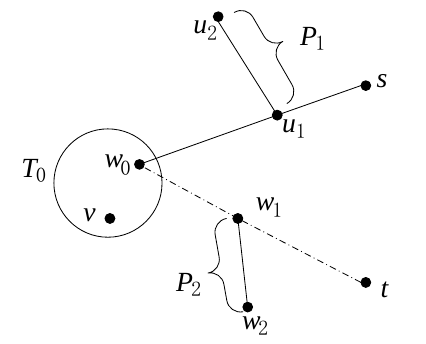, scale=1.3}
\end{center}
\vspace{-0.5cm}
\caption{The configuration of the vertices $w_{0}$, $u_{1}$, $u_2$, $w_{1}$, $w_{2}$, $s$ and $t$.}
\label{proof-lemma-2-3}
\end{figure}

We claim that $V(P_{1})\cap V(T_{2})=\emptyset$. Otherwise, let $x\in V(T_{2})\cap V(P_{1})$. Then the path $E(P(x,v,T_{1}))\setminus E(P(x,v,T_{2}))\not = \emptyset$, since $E(P(w_{0},u_{1},T_{1}))\not= \emptyset$. So the two paths $P(x,v,T_{1})$ and $P(x,v,T_{2})$ form a cycle in the original graph $T$. This contradicts to the fact that $T$ is a tree. In the same way, we obtain that  $V(P_{2})\cap V(T_{2})=\emptyset$.

Since $|E(P_{1})| = d_T(u_{2},T_{1}) = \ec_T(T_{1})$ and $|E(P(w_{0},t,T_{2}))| = d_T(t,T_{1})$, we have
\begin{equation}\label{P-1-length}
|E(P_{1})|\geq |E(P(w_{0},t,T_{2}))|\,.
\end{equation}
Moreover, since we have assumed that $u_{1}\in V(T_{1})\setminus V(T_{0})$, we infer that $|E(P(u_{1},w_{0},T))|>0$. Together with~\eqref{P-1-length} this yields
    \begin{align}\label{u-2-to-T-2}
    |E(P(u_{2},w_{0},T))|&=|E(P_{1})|+|E(P(u_{1},w_{0},T))|\nonumber\\
                         &\geq |E(P(w_{0},t,T_{2}))|+|E(P(u_{1},w_{0},T))|\nonumber \\
                         &>|E(P(w_{0},t,T_{2}))|\,.
    \end{align}

    Now we pay attention to the tree $T_{2}$.  Let $S$ be the Steiner $k$-ecc $v$-set corresponding to the tree $T_{2}$. Let $S'=S\setminus\{t\}\cup \{u_{2}\}$. Then $S'$ is a $k$-set containing the vertex $v$. In the following, we will establish a contradition that the tree $T_{2}'=MST(S',T)$ has more edges than the tree $T_{2}$. Recall that $T_{2}$ is a Steiner $k$-ecc $v$-tree.

Let $P(w,t,T_{2})$ be the quasi-pendant path with respect to $v$ in $T_{2}$ and distinguish the following cases.

\medskip\noindent
\textbf{Case 1}: $w\in V(P(w_{0},t, T_{2}))\setminus \{w_{0}\}$. \\
In this case the tree $T_{2}'=MST(S',T)$ can be represented as $T_{2}'=T_{2}\setminus P(w,t,T_{2})\cup P(w_{0},u_{2},T)$. Since the path $P(w,t,T_{2})$ is a sub-path of $P(w_{0},t,T_{2})$, $|E(P(w_{0},t,T_{2}))|\ge |E(P(w,t,T_{2}))|$ holds. Combining this fact with~\eqref{u-2-to-T-2} we have:
    \begin{align*}
    |E(T_{2}')| &= |E(T_{2})|-|E(P(w,t,T_{2}))|+|E(P(w_{0},u_{2},T))|\\
                &\geq |E(T_{2})|-|E(P(w_{0},t,T_{2}))|+|E(P(w_{0},u_{2},T))|\\
                &>|E(T_{2})|\,.
     \end{align*}

\medskip\noindent
\textbf{Case 2}: $w\in V(T_{0})$.\\
Now the tree $T_{2}'=MST(S',T)$ can be represented as $T_{2}' = T_{2}\setminus P(w,t,T_{2})\cup P(w,u_{2},T)$.  Recall that the path $P(w,t,T_{2})$ is composed of two sub-paths which are $P(w,w_{0},T_{2})$ and $P(w_{0},t,T_{2})$ respectively. And $P(w,u_{2},T)$ is also composed of two sub-paths which are $P(w,w_{0},T_{2})$ and $P(u_{2},w_{0},T)$. By~\eqref{u-2-to-T-2} we can estimate as follows:
    \begin{align*}
    |E(T_{2}')|&=|E(T_{2})|-|E(P(w,t,T_{2}))|+|E(P(w,u_{2},T))|\\
              &=|E(T_{2})|-(|E(P(w,w_{0},T_{2}))|+|E(P(w_{0},t,T_{2}))|)\\
              & +(|E(P(w,w_{0},T_{2}))|+|E(P(u_{2},w_{0},T))|)\\
              &=|E(T_{2})|-|E(P(w_{0},t,T_{2}))|+|E(P(u_{2},w_{0},T))|\\
              &>|E(T_{2})|\,.
    \end{align*}
In both cases we have thus proved that  $|E(T_{2}')| > |E(T_{2})|$, a contradiction to the fact that $T_{2}$ is a Steiner $k$-ecc $v$-tree.
\end{proof}

\subsection{A linear time algorithm}

By Theorems~\ref{k-ecc-tree-preserve} and~\ref{longest-path-no-matter}, the problem to calculate the Steiner $k$-eccentricity of a  given vertex of a tree can be reduced to recursively finding a longest path starting at a given vertex. This is formally done in Algorithm~\ref{k-ecc}.

\begin{algorithm}[H]\label{k-ecc}
\caption{k-ECC($v$, $T$, $k$)}
\LinesNumbered 
\KwIn{A vertex $v$, a tree $T$, and an integer $k\geq 2$}
\KwOut{The Steiner $k$-eccentricity of $v$ in $T$}
\If {the number of leaves is less than $k$}{\label{non-cut-start}
    \Return $|V(T)|-1$\;
}\label{non-cut-end}
\Else{\label{start}
    $ecc=0$\;
    \For{$i=1$ {\bf to} $k-1$}{
        Longest\_Path($v$, $T$, path)\;\label{find-lingest-shortest-path}
        $ecc=ecc+|E(P)|$\;
        Path\_Shrinking($v$, $T$, path)\; \label{shrinking}
        }
    \Return $ecc$
    }\label{end}
\end{algorithm}

\bigskip
To explain Steps 1-3 of Algorithm~\ref{k-ecc}, we state the following lemma.

\begin{lemma}\label{trivial-case}
Let $k\ge 3$ and let $v$ be a vertex of a tree $T$. If $|L(T)| < k$, then the Steiner $k$-ecc $v$-tree is the entire tree $T$.
\end{lemma}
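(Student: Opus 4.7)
The plan is to combine Lemma~\ref{lem:earlier}(i) with a simple structural observation: in a tree, the only subtree that contains every leaf is the tree itself. This reduces the proof to almost a one-liner.

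First, since $|L(T)| < k$, we have $k > |L(T)|$, so Lemma~\ref{lem:earlier}(i) applies and tells us that every Steiner $k$-ecc $v$-set $S$ contains all leaves of $T$, i.e.\ $L(T) \subseteq S$. Let $T^{*}$ be a Steiner $k$-ecc $v$-tree, which by definition is a minimum Steiner tree on $S$; in particular $T^{*}$ is a subtree of $T$ with $L(T) \subseteq V(T^{*})$.

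Next, I would verify the observation that any subtree $T'$ of $T$ with $L(T) \subseteq V(T')$ must satisfy $T' = T$. For this, take any $x \in V(T)$; if $x$ is a leaf it is already in $V(T')$, and otherwise $x$ has degree at least $2$ in $T$, so deleting $x$ from $T$ yields at least two components, each of which contains at least one leaf of $T$. Pick one leaf $\ell_{1}, \ell_{2}$ in two distinct components; then $x$ lies on the unique path $P(\ell_{1},\ell_{2},T)$. Since $\ell_{1},\ell_{2} \in V(T')$ and $T'$ is a subtree of the tree $T$, this entire path is contained in $T'$, so $x \in V(T')$. Hence $V(T') = V(T)$ and $T' = T$.

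Applying this observation to $T^{*}$ gives $T^{*} = T$, which is exactly the conclusion. The only mild point to double-check is the vertex-degree argument used in the observation (in particular that an internal vertex of a tree separates the leaf set into at least two nonempty parts), but this is immediate from the fact that in a tree every component of $T - x$ contains a vertex of $T$ adjacent to $x$, and repeatedly moving to a neighbor of higher degree eventually reaches a leaf. I do not anticipate any real obstacle: the heart of the argument is the already-proved Lemma~\ref{lem:earlier}(i), and the rest is a standard tree observation.
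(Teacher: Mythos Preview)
Your proof is correct and takes essentially the same approach as the paper: both hinge on the observation that any subtree of $T$ containing all of $L(T)$ must equal $T$. The paper arrives there by directly exhibiting the set $S=\{v\}\cup L(T)$ (of size at most $k$) and noting that $MST(S,T)=T$, while you route through Lemma~\ref{lem:earlier}(i) to the same conclusion; your final parenthetical about ``moving to a neighbor of higher degree'' is garbled, but the preceding path argument already establishes the needed fact.
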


\begin{proof}
The cardinality of the set $S=\{v\}\cup L(T)$ is at most $k$, since $|L(T)|<k$. Moreover, the $MST(S,T)$ is the entire tree $T$. Hence the Steiner $k$-ecc $v$-tree is the entire $T$.
\end{proof}

Steps \ref{start}-\ref{end} form the recursive reduction which consists of finding $k-1$ times a longest path starting at a vertex. In Step~7 we use the depth-first search (DFS) algorithm~\cite{Cormen2001} to find a longest path starting at a given vertex, tte details are present in Algorithm~\ref{alg-longest-path-at-a-vertex}. Step~9 shrinks the path obtained in Step~7 into a single vertex for the purpose of the next loop, the details are presented in Algorithm~\ref{alg-shrink-graph}. Algorithms~\ref{alg-longest-path-at-a-vertex} and~ \ref{alg-shrink-graph} are borrowed from~\cite{Li2020} where one can find additional details on them. For the statement of these algorithms we recall that if $v$ is a vertex of a graph $G$, then the set of its neighbours is denoted by $N_{G}(v)$.

\begin{algorithm}[H]\label{alg-longest-path-at-a-vertex}
\caption{Longest\_Path($v$, $T$, path)}
\LinesNumbered 
\KwIn{A vertex $v$, a tree $T$ rooted at $v$, and an array named \emph{path} to store a longest path starting at $v$}
\KwOut{the length of a longest path starting at $v$}
max=0; temp=max;\\
\For{each vertex $u\in N_{T}(v)$ which has not been visited till now}{
　　temp=Longest\_Path($u$, $T$, path)\;
　　\If{temp$>$max}{
        path[$v$]=$u$\;
　　　　max=temp;
　　}
}
\Return max+1\;
\end{algorithm}

\begin{algorithm}[H]\label{alg-shrink-graph}
\caption{Path\_Shrinking($v$, $T$, path)}
\LinesNumbered 
\KwIn{A tree $T$, a vertex $v$, and an array named \emph{path} to store a longest path starting at $v$}
\KwOut{A new tree obtained by shrinking the longest path into the single vertex $v$}
w=v\;
\While{\emph{path[w]}$\not =\emptyset$}{
    \For{each vertex $x\in N_{T}(w)$}{
        remove the edge $(w,x)$ from $T$\;
        add a new edge between $x$ and $v$ in $T$\;
    }
    w=path[w];
}
\end{algorithm}

\begin{theorem}\label{crooectness-and-complexity}
Algorithm \ref{k-ecc} computes the Steiner $k$-eccentricity of a vertex in a tree and can be implemented to run in $O(k(n+m))$ time, where $n$ and $m$ are the order and the size of the tree, respectively.
\end{theorem}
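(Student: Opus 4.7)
First I would handle the two branches of Algorithm~\ref{k-ecc} separately. The easy branch (lines~\ref{non-cut-start}--\ref{non-cut-end}) is triggered when $|L(T)|<k$; Lemma~\ref{trivial-case} then says that every Steiner $k$-ecc $v$-tree is all of $T$, so the correct answer is $|V(T)|-1$, which the algorithm returns. For the main branch, the plan is to prove by induction on the loop index $i=1,\ldots,k-1$ the invariant that after the $i$-th iteration the accumulator $ecc$ equals $\ec_{i+1}(v,T)$. The base case $i=1$ reduces to the well-known fact that the Steiner $2$-eccentricity of $v$ is the length of a longest path from $v$, which is exactly what Longest\_Path computes on the first pass.

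For the inductive step, combining Theorems~\ref{k-ecc-tree-preserve} and~\ref{longest-path-no-matter} yields the recurrence
\[
\ec_{i+1}(v,T)\;=\;\ec_i(v,T)+\ec_T(T_v^i)
\]
valid for any Steiner $i$-ecc $v$-tree $T_v^i$: Theorem~\ref{k-ecc-tree-preserve} forces the edges of a suitable $T_v^{i+1}\setminus T_v^i$ to form a single pendant path attached to $T_v^i$, and Theorem~\ref{longest-path-no-matter} guarantees that its length equals $\ec_T(T_v^i)$ regardless of which $T_v^i$ was chosen. It then remains to verify that the longest path from $v$ discovered by the DFS during iteration $i$ has, when measured in the original tree, length exactly $\ec_T(T_v^i)$. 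This is precisely what Path\_Shrinking (Algorithm~\ref{alg-shrink-graph}) is designed to arrange: after the previous $i-1$ iterations its contractions identify every vertex of the accumulated Steiner tree $T_v^i$ with $v$, so the distance in the shrunk tree from $v$ to an outside vertex $x$ equals the distance in $T$ from $T_v^i$ to $x$. Taking the maximum over $x$ yields $\ec_T(T_v^i)$, and the invariant is preserved.

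The complexity analysis is routine: the leaf count and short-circuit case take $O(n)$; in the main branch the for-loop runs $k-1$ times, each Longest\_Path is a single DFS of cost $O(n+m)$, and each Path\_Shrinking touches only the vertices and incident edges lying on the current longest path, hence $O(n)$. Summing over iterations gives $O(k(n+m))$, as claimed.

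The main obstacle I anticipate is the formal verification of the shrinking step: one must argue carefully that after iteration $i-1$ the tree maintained by the algorithm is indeed isomorphic, as a rooted tree at $v$, to the quotient of $T$ obtained by contracting the accumulated Steiner tree $T_v^i$ to the single vertex $v$, and that Algorithm~\ref{alg-shrink-graph} realizes this contraction correctly (the subtle point being that edges rerouted to $v$ preserve exactly the paths leaving $T_v^i$). Once this isomorphism is in place, the bijection between longest paths from $v$ in the shrunk tree and pendant extensions of $T_v^i$ in $T$ is immediate, and both the recurrence and the inductive invariant close cleanly.
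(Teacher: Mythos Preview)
Your proposal is correct and follows essentially the same approach as the paper: correctness via Theorems~\ref{k-ecc-tree-preserve} and~\ref{longest-path-no-matter} (which together give exactly your recurrence $\ec_{i+1}(v,T)=\ec_i(v,T)+\ec_T(T_v^i)$), the trivial branch via Lemma~\ref{trivial-case}, and the $O(k(n+m))$ bound from $k-1$ DFS-plus-contraction passes. The paper's own proof is considerably terser and leaves the inductive invariant and the contraction-isomorphism check implicit, so your write-up simply spells out what the paper takes for granted.
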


\begin{proof}
The correctness of Algorithm \ref{k-ecc} is ensured by Theorems~\ref{k-ecc-tree-preserve} and~\ref{longest-path-no-matter}.

By Lemma \ref{trivial-case}, the Steiner $k$-eccentricity of a vertex in a tree is equal to the size of the tree if its number of leaves is less than $k$. There is a linear-time algorithm to find all leaves of a tree by the depth-first search (DFS) algorithm \cite{Cormen2001}. Hence Steps \ref{non-cut-start}-\ref{non-cut-end} can be implemented in $O(n+m)$ time. Similarly, each loop in Steps~6-9\ can be implemented in $O(n+m)$ time, thus all loops require $O(k(n+m))$ time.
\end{proof}

To conlcude the section we again point out that the structural properties to support the algorithm(s) from~\cite{Li2020}  only ensure calculation of the Steiner $3$-eccentricity. Hence we need to develop a new  approach that works for general $k$.

\section{Upper and lower bounds}\label{bounds}

In this section we establish an upper and a lower bound on the average Steiner $k$-eccentricity index of a tree for $k\geq 3$. These bounds were earlier proved in~\cite{Li2020} in the special case $k=3$. It is appealing that to obtain the bound for the general case, the proof idea is quite different and significantly simpler that the one in~\cite{Li2020}. For the new approach, the following construction is essential.

\medskip\noindent
\textbf{$\pi$-transformation}:
Let $T$ be a tree and let $P = P(u,v,T)$ be a path with at least one edge, such that every internal vertex of $P$ is of degree $2$ in $T$. Let $X$ be the maximal subtree containing $u$ in the tree $T\setminus E(P)$, and $Y$ be the  maximal subtree containing $v$ in the graph $T\setminus E(P)$. We may without loss of generality assume that $\ec_T(u,X)\leq \ec_T(v,Y)$. Then the $\pi$-\emph{transformation} $\pi(T)$ of  $T$ is defined as $T' = \pi(T) = T\setminus\{(u,w):w\in N_{X}(u)\}\cup\{(v,w):w\in N_{X}(u)\}$. The inverse transformation is is $T=\pi^{-1}(T')=T'\setminus\{(v,w):w\in N_{X}(v)\}\cup\{(u,w):w\in N_{X}(v)\}$. See Fig.~\ref{pi-trans}.

\begin{figure}[htbp]
\vspace{0.5cm}
\begin{center}
\epsfig{file=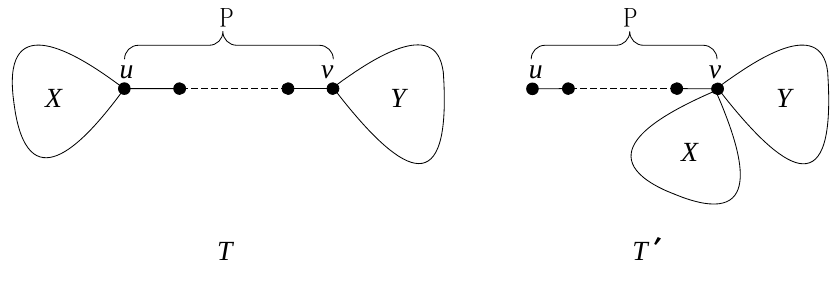, scale=1.3}
\end{center}
\vspace{-0.5cm}
\caption{$T'=\pi(T)$ and $T=\pi^{-1}(T')$}
\label{pi-trans}
\end{figure}

\begin{lemma}\label{vertex-on-P}
Let $T$, $P$, $u$, $v$, $X$, $Y$, and $T'$ be as in the definition of the $\pi$-transformation. If $w\in V(P)\cup V(X)$, then in $T'$ there exists a Steiner $k$-ecc $w$-set $S$ such that $S\cap (V(Y)\setminus\{v\})\not =\emptyset$.
\end{lemma}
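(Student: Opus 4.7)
The plan is to prove the lemma by an exchange argument. Fix a vertex $y\in V(Y)\setminus\{v\}$ realizing $\ec_T(v,Y)$, and set $d=d_{T'}(v,y)=\ec_T(v,Y)$. Starting from an arbitrary Steiner $k$-ecc $w$-set $S^*$ in $T'$, if $S^*\cap(V(Y)\setminus\{v\})\ne\emptyset$ we take $S=S^*$; otherwise we build $S$ from $S^*$ by swapping one vertex. Two immediate reductions: if $k>|L(T')|$, then Lemma~\ref{lem:earlier}(i) forces every leaf of $T'$ into $S^*$, including at least one leaf of $V(Y)\setminus\{v\}$; and if $v\in S^*$, then $(S^*\setminus\{v\})\cup\{y\}$ still contains $w$ and its minimum Steiner tree in $T'$ is $MST(S^*,T')\cup P(v,y,T')$, giving Steiner distance $d_{T'}(S^*)+d>d_{T'}(S^*)$ and contradicting the optimality of $S^*$. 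So in what remains we may assume $2\le k\le|L(T')|$, $v\notin S^*$, and $S^*\cap V(Y)=\emptyset$, hence $S^*\subseteq(V(P)\setminus\{v\})\cup V(X)$; by Lemma~\ref{lem:earlier}(ii) every element of $S^*\setminus\{w\}$ is a leaf of $T'$, and such leaves lying in $(V(P)\setminus\{v\})\cup V(X)$ are exactly $u$ together with the leaves of the $X$-subtrees reattached to $v$ under the $\pi$-transformation.

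The core swap: choose $\ell\in S^*\setminus\{w\}$ lying in some $X$-subtree and set $S=(S^*\setminus\{\ell\})\cup\{y\}$. In a tree one has $|MST(S^*,T')|=|MST(S^*\setminus\{\ell\},T')|+d_{T'}(\ell,MST(S^*\setminus\{\ell\},T'))$, and similarly $|MST(S,T')|=|MST(S^*\setminus\{\ell\},T')|+d_{T'}(y,MST(S^*\setminus\{\ell\},T'))$. Since $MST(S^*\setminus\{\ell\},T')$ avoids $V(Y)\setminus\{v\}$, every path from $y$ to it runs through $v$, so $d_{T'}(y,MST(S^*\setminus\{\ell\},T'))=d+d_{T'}(v,MST(S^*\setminus\{\ell\},T'))\ge d$. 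On the $\ell$-side I split cases: either the nearest MST vertex lies inside $\ell$'s $X$-subtree, in which case the pendant stays inside the subtree and its length is controlled by an eccentricity bounded by $\ec_T(u,X)$; or it lies outside, in which case the pendant splits through $v$ with $\ell$-$v$ portion of length $d_{T'}(\ell,v)=d_T(u,\ell)\le\ec_T(u,X)$. Invoking the defining hypothesis $\ec_T(u,X)\le\ec_T(v,Y)=d$ of the $\pi$-transformation in either case gives $d_{T'}(S)\ge d_{T'}(S^*)$; by optimality of $S^*$ this is an equality, so $S$ is itself a Steiner $k$-ecc $w$-set with $y\in S\cap(V(Y)\setminus\{v\})$.

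The hardest part is guaranteeing that an $\ell$ in some $X$-subtree actually exists inside $S^*\setminus\{w\}$. For $k\ge3$ this follows by pigeonhole: $|S^*\setminus\{w\}|\ge2$ but $u$ is the unique leaf of $T'$ sitting on $V(P)\setminus\{v\}$, so at least one element of $S^*\setminus\{w\}$ must lie in an $X$-subtree. The delicate residual case is $k=2$ with $S^*=\{w,u\}$, where the pendant of $u$ in $MST(S^*,T')$ can stretch the full length $|E(P)|$ and the swap estimate degrades. This borderline case is the main obstacle I foresee in executing the plan; it will require a separate, more careful distance comparison exploiting the precise position of $w$ on $V(P)\cup V(X)$ together with the $\pi$-transformation hypothesis to conclude that $\{w,y\}$ itself attains $\ec_2(w,T')$.
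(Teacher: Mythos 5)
Your proof is essentially the paper's: the paper likewise fixes a vertex $v'\in V(Y)$ at distance $\ec_{T'}(v,Y)$ from $v$ and swaps it for a member of $S'\setminus\{w\}$, using the hypothesis $\ec_T(u,X)\leq \ec_T(v,Y)$ to see that the Steiner distance cannot drop, so the modified set is again a Steiner $k$-ecc $w$-set. The only divergence is in the bookkeeping: where you invoke Lemma~\ref{lem:earlier}(ii) plus a pigeonhole count to force the swapped vertex into an $X$-branch, the paper instead keeps a separate case for $Q\subseteq V(P)$ and there swaps the element of $Q$ nearest to $v$; your variant is, if anything, the more carefully justified of the two. The obstacle you flag at $k=2$ is moot: this lemma belongs to Section~\ref{bounds}, which is set up and used only for $k\geq 3$ (the paper's own proof opens with ``Since $k\ge 3$, the cardinality of $Q$ is at least two''), and the $k=2$ extremal statement is imported separately from Ili\'c via Lemma~\ref{2-ecc}.
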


\begin{proof}
Let $S'$ be a Steiner $k$-ecc $w$-set in $T'$ such that $S\cap (V(Y)\setminus\{v\}) =\emptyset$, and set  $Q=S'\setminus \{w\}$. Since $k\ge 3$, the cardinality of $Q$ is at least two. Let $v'\in V(Y)$ such that the distance between $v$ and $v'$ is $\ec_{T'}(v,Y)$. Consider the following two cases.

\medskip\noindent
\textbf{Case 1}: $Q\cap V(X)=\emptyset$. \\
In this case the vertices of $Q$ are all in $P$. Let $w'\in Q$ be the nearest vertex to $v$. Construct a new vertex set $S''=(S'\setminus\{w'\}) \cup \{v'\}$.

\medskip\noindent
\textbf{Case 2}: $Q\cap V(X)\not =\emptyset$. \\
Let $w'\in Q\cap V(X)$. Construct a new vertex set $S''= (S'\setminus\{w'\}) \cup \{v'\}$.

In each of the two cases, the size of $MST(S'',T')$ is not less than the size of  $MST(S',T')$, hence the assertion.
\end{proof}

\begin{lemma}\label{pi-not-increase}
Under the notation of Lemma~\ref{vertex-on-P}, ${\rm aecc}_{k}(T)\geq {\rm aecc}_{k}(T')$.
\end{lemma}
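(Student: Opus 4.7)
The plan is to prove the stronger pointwise statement $\ec_k(w,T) \ge \ec_k(w,T')$ for every $w \in V(T) = V(T')$; summing over $w$ then yields the desired inequality on averages.

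First I would record two basic structural facts about the $\pi$-transformation. (a) The subgraph induced in $T$ on $V(P)\cup V(Y)$ coincides with the subgraph induced in $T'$ on the same vertex set, since the transformation only rewires the $X$-neighbors of $u$. (b) The map that is the identity on $V(X)\setminus\{u\}$ and sends $u\mapsto v$ is an isomorphism from the subtree of $T$ induced by $V(X)$ onto the subtree of $T'$ induced by $(V(X)\setminus\{u\})\cup\{v\}$. Consequently, for any $A\subseteq V(X)\setminus\{u\}$, the minimal subtree of $T$ containing $\{u\}\cup A$ has the same number of edges as the minimal subtree of $T'$ containing $\{v\}\cup A$.

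Now fix a vertex $w$ and select a Steiner $k$-ecc $w$-set $S$ in $T'$ as follows: take $S$ arbitrary if $w \in V(Y)$, and invoke Lemma~\ref{vertex-on-P} to pick $S$ with $S \cap (V(Y)\setminus\{v\}) \ne \emptyset$ if $w \in V(P)\cup V(X)$. I would then use the same $S$ as a candidate $k$-set in $T$ and compare $d_T(S)$ with $d_{T'}(S)$. If $S\cap (V(X)\setminus\{u\})=\emptyset$ then $S\subseteq V(P)\cup V(Y)$, so by (a) the two Steiner trees coincide and $d_T(S)=d_{T'}(S)$. Otherwise $S$ meets both $V(X)$ and $V(Y)$ (the latter by the choice of $S$ or because $w\in V(Y)$); the Steiner tree in $T$ then traverses the whole path $P$ to bridge $u$ inside $X$ with $v$ inside $Y$, while the Steiner tree in $T'$ uses only the sub-path of $P$ from $v$ out to the farthest $S$-vertex on $P$. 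The $X$-contribution agrees on both sides by (b) and the $Y$-contribution agrees by (a), so $d_T(S)-d_{T'}(S)$ is precisely the length of the unused tail of $P$ in $T'$, which is nonnegative. In either case, $\ec_k(w,T)\ge d_T(S)\ge d_{T'}(S) = \ec_k(w,T')$.

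The main obstacle I anticipate is the bookkeeping in the "meets both sides" case, especially for $w=u$ (where $u$ lies in both $V(X)$ and $V(P)$, so one must be careful that $u$ is forced into the Steiner tree on the $T$-side and that all of $P$ is used) and for $w\in V(P)\setminus\{u,v\}$ (where $w$ already pins down part of the $P$-contribution on the $T'$-side); in each sub-situation one must verify that the Steiner tree in $T'$ genuinely decomposes as an $X$-part rooted at $v$, a $Y$-part rooted at $v$, and a $P$-sub-path emanating from $v$, so that the isomorphism in (b) cleanly matches the $X$-parts.
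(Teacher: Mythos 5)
Your proposal is correct and follows essentially the same route as the paper's own proof: establish the pointwise inequality $\ec_k(w,T)\ge \ec_k(w,T')$ by taking an arbitrary Steiner $k$-ecc $w$-set in $T'$ when $w\in V(Y)$ and invoking Lemma~\ref{vertex-on-P} to get one meeting $V(Y)\setminus\{v\}$ when $w\in V(P)\cup V(X)$, then comparing the Steiner tree sizes in $T$ and $T'$ on that same set and averaging. Your facts (a) and (b) and the tail-of-$P$ accounting simply spell out the comparison $d_T(S)\ge d_{T'}(S)$ that the paper asserts without further detail.
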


\begin{proof}
If $w$ is a vertex in $V(Y)\setminus \{v\}$, then for any Steiner $k$-ecc $w$-set $S'$ in $T'$, the size of a minimum Steiner tree on $S'$ in graph $T$ is not less than that in $T'$. So the Steiner $k$-eccentricity of every vertex $w\in V(Y)$ in $T$ is not less than that in $T'$.

If $w$ is a vertex in $V(P)\cup V(X)$, then by Lemma \ref{vertex-on-P}, there exists a Steiner $k$-ecc $w$-set $S'$ in $T'$, such that $S'\cap (V(Y)\setminus \{v\})\not =\emptyset$. The size of a minimum Steiner tree on $S'$ in $T$ is not less than that in $T'$. Therefore the Steiner $k$-eccentricity of every vertex $w\in V(P)\cup V(X)$ in $T$ is not less than that in $T'$.

In any case, the Steiner $k$-eccentricity of every vertex $v\in V(T')$ is not larger than that in $T$. As the average Steiner $k$-eccentricity index is the mean value of all vertices' Steiner $k$-eccentricities, the average Steiner $k$-eccentricity of $T'$ is not large than that of $T$.
\end{proof}

If the order of a tree $T$ is not larger than $k$, then a Steiner $k$-ecc $v$-set contains all vertices of $T$ for every $v\in V(T)$. Then every Steiner $k$-ecc $v$-tree is the entire tree $T$ for every vertex $v$. So for a given $k\ge 3$, we just consider the trees where the order of each is more than $k$.

\begin{theorem}\label{bounds-on-trees}
If $k\geq 3$ is an integer, and $T$ a tree on order $n>k$, then
    \[ k-\frac{1}{n}\leq {\rm aecc}_{k}(T)\leq n-1.\]
Moreover, the star $S_n$ attains the lower bound, and the path $P_n$ attains the upper bound.
\end{theorem}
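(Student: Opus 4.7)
The plan is to treat the two bounds separately. The upper bound is straightforward: since every minimum Steiner tree is a subtree of $T$, its size is at most $|E(T)|=n-1$. Hence $\ec_{k}(v,T)\le n-1$ for every $v\in V(T)$, and so ${\rm aecc}_{k}(T)\le n-1$. The path $P_n$ attains equality, because for any $v\in V(P_n)$ one can pick a $k$-subset $S$ containing $v$ together with the two endpoints of $P_n$ (possible since $n>k\ge 3$); the minimum Steiner tree on $S$ is then the whole path, giving $\ec_{k}(v,P_n)=n-1$.

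The lower bound proceeds in two steps. First, one evaluates ${\rm aecc}_{k}(S_n)$ directly. Let $c$ denote the center of $S_n$ and $\ell_1,\dots,\ell_{n-1}$ its leaves. For the center, any Steiner $k$-ecc $c$-set has the form $\{c\}\cup L$ with $L$ a $(k-1)$-subset of leaves, and the corresponding Steiner tree has exactly $k-1$ edges, so $\ec_{k}(c,S_n)=k-1$. For a leaf $\ell$, one chooses $\{\ell\}$ together with $k-1$ other leaves (possible since $n-2\ge k-1$), and the Steiner tree consists of $c$ together with $k$ pendant edges, so $\ec_{k}(\ell,S_n)=k$. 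Summing,
\[
{\rm aecc}_{k}(S_n)=\frac{(k-1)+(n-1)k}{n}=k-\frac{1}{n}.
\]

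The second and main step is to show ${\rm aecc}_{k}(T)\ge {\rm aecc}_{k}(S_n)$ for every tree $T$ on $n$ vertices. By Lemma~\ref{pi-not-increase}, a $\pi$-transformation never increases the average Steiner $k$-eccentricity, so it suffices to produce a sequence of $\pi$-transformations that turns $T$ into $S_n$. The key structural observation is that every non-star tree contains an edge $uv$ whose endpoints are both non-leaves: starting at any non-leaf $v$, unless all its neighbors are leaves (which would force $T=S_n$), some neighbor is a non-leaf. Taking $P=uv$ as a single-edge path makes the internal-vertex condition vacuous, and after orienting $u,v$ so that $\ec_T(u,X)\le \ec_T(v,Y)$, the $\pi$-transformation detaches every neighbor of $u$ in $X$ and reattaches it at $v$, thereby turning the non-leaf $u$ into a leaf while leaving all other degrees unchanged (in particular $v$ remains non-leaf). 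The leaf count therefore strictly increases with each step, and since the only tree on $n$ vertices with $n-1$ leaves is $S_n$, the process terminates at $S_n$ after finitely many steps. The main obstacle is precisely this monovariant argument, i.e., exhibiting an applicable non-trivial $\pi$-transformation whenever $T\neq S_n$ and verifying that leaf count is a strict monovariant; once these are in hand, the chain ${\rm aecc}_{k}(T)\ge {\rm aecc}_{k}(\pi(T))\ge \cdots \ge {\rm aecc}_{k}(S_n)=k-1/n$ yields the bound.
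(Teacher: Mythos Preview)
Your proof is correct. For the lower bound you follow the paper's route exactly: repeated $\pi$-transformations reduce $T$ to $S_n$, and Lemma~\ref{pi-not-increase} gives the monotonicity. You add a detail the paper leaves implicit, namely the termination argument via the leaf-count monovariant (find an edge with two non-leaf endpoints, apply $\pi$ on that single edge, and observe that exactly one vertex becomes a leaf); this is a clean way to justify the paper's phrase ``repeatedly applying the $\pi$-transformation \dots\ we obtain the star $S_n$.''

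Where you genuinely diverge is the upper bound. The paper argues symmetrically: repeated $\pi^{-1}$-transformations turn $T$ into $P_n$, and since $\pi$ does not increase ${\rm aecc}_k$, its inverse does not decrease it, so ${\rm aecc}_k(T)\le {\rm aecc}_k(P_n)=n-1$. You instead bypass the transformation entirely with the trivial observation that every Steiner tree is a subtree of $T$ and hence has at most $n-1$ edges, then verify equality on $P_n$ directly. Your argument is shorter and more elementary; the paper's approach has the aesthetic advantage of treating both extremes uniformly and of identifying $P_n$ as the unique endpoint of the $\pi^{-1}$-process, but for the bare inequality your direct bound is preferable.
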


\begin{proof}
Repeatedly applying the $\pi$-transformation on $T$ until it is possible, we obtain the star $S_n$. On the other hand, repeatedly applying the $\pi^{-1}$ transformation on $T$ until it is possible,  we obtain the path $P_n$. By Lemma \ref{pi-not-increase}, the $\pi$-transformation does not increase the average Steiner $k$-eccentricity of $T$. Hence the star $S_n$ attains the minimum Steiner $k$-eccentricity, and the path $P_n$ attains the maximum Steiner $k$-eccentricity. Finally, we obtain ${\rm aecc}_{k}(S_n)=k-\frac{1}{n}$ and ${\rm aecc}_{k}(P_n)=n-1$ by straightforward computation.
\end{proof}

In Fig.~\ref{extremal-graphs} an example is given in which the process of constructing extremal graphs, that is, a start and a path, by means of  the $\pi$-transformation and the $\pi^{-1}$-transformation.

\begin{figure}[htbp]
\begin{center}
\epsfig{file=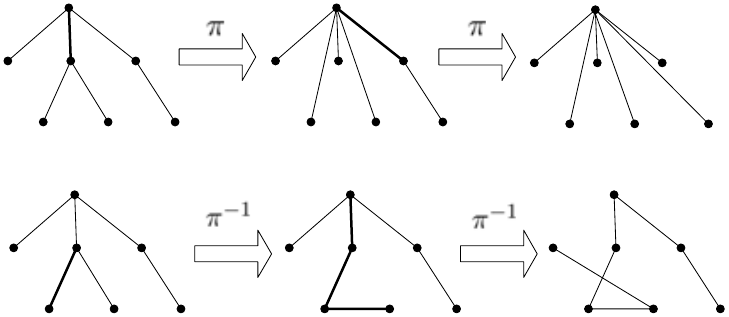, scale=1.3}
\end{center}
\vspace{-0.5cm}
\caption{Constructing extremal graphs using the $\pi$ transformation and the $\pi^{-1}$ transformation. Bold edges denote the paths defined in the transformations.}
\label{extremal-graphs}
\end{figure}

In~\cite{Ilic2012} the average Steiner $2$-eccentricity of trees was investigated. For the sake of our final result,  we recall the following result.

\begin{lemma} {\rm (\cite{Ilic2012})}
\label{2-ecc}
Let $T$ be a tree of order $n$. Then ${\rm aecc}_{2}(S_{n})\leq {\rm aecc}_{2}(T)\leq {\rm aecc}_{2}(P_{n})$. The left equality holds if and only if $T\cong S_{n}$, while the right equality holds if and only if $T\cong P_{n}$.
\end{lemma}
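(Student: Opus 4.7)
The plan is to observe that when $k=2$ the Steiner 2-eccentricity reduces to the classical eccentricity $\ec(v,T) = \max_{u\in V(T)} d_T(v,u)$, so the assertion is the well-known statement that among all trees of order $n$, the star $S_n$ minimizes and the path $P_n$ maximizes the average eccentricity. As a preliminary step I would record ${\rm aecc}_2(S_n) = (2n-1)/n$ (the center has eccentricity $1$ and each of the $n-1$ leaves has eccentricity $2$), and note that for the path $P_n=v_1\cdots v_n$ one has $\ec(v_i,P_n)=\max(i-1,n-i)$, which can be summed in closed form but whose only use in the proof is its extremality.

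For the lower bound I would run a short counting argument. A vertex of $T$ has eccentricity $1$ if and only if its degree equals $n-1$, and for $n\ge 3$ a tree cannot contain two such vertices (any third vertex together with the two would produce a cycle). Hence at most one vertex of $T$ has eccentricity $1$, while every other vertex has eccentricity at least $2$; this gives $\sum_{v\in V(T)}\ec(v,T)\ge 1+2(n-1)=2n-1$, and equality forces a unique vertex of degree $n-1$ and therefore $T\cong S_n$.

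For the upper bound the plan is a flattening transformation: pick any branching vertex $u$ of $T$ together with one of the pendant paths at $u$, and reattach that path as an extension of the longest pendant path rooted at $u$, thereby strictly reducing the number of leaves. A direct case analysis should show that this operation weakly increases $\ec(v,T)$ for every $v$, and strictly for at least one $v$ as long as a branching vertex survives, so iterating drives $T$ to $P_n$ and identifies $P_n$ as the unique maximizer. The main obstacle will be verifying this monotonicity cleanly, since the vertex realizing the eccentricity of a given $v$ may change across the transformation, forcing a case split on whether $v$ lies in the moved pendant path, in the retained subtree, or along the lengthened path; note also that the $\pi^{-1}$-direction of Lemma~\ref{pi-not-increase} is not directly available here because its proof required $|Q|\ge 2$, i.e.\ $k\ge 3$.
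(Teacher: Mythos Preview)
The paper does not supply a proof of this lemma; it is quoted from Ili\'c~\cite{Ilic2012} and invoked solely so that Corollary~\ref{combination} covers $k=2$, precisely because the paper's own $\pi$-machinery (Lemmas~\ref{vertex-on-P} and~\ref{pi-not-increase}) is proved only under $k\ge 3$. Your observation that the proof of Lemma~\ref{vertex-on-P} needs $|Q|\ge 2$ is exactly why the authors outsource this case rather than absorbing it into Theorem~\ref{bounds-on-trees}. So there is no in-paper argument to compare against.

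On the content of your sketch: the lower-bound counting argument is correct and complete for $n\ge 3$ (for $n\le 2$ one has $S_n=P_n$ and the statement is vacuous). For the upper bound there is a genuine gap beyond the case analysis you already flag. You write ``pick any branching vertex $u$ of $T$ together with one of the pendant paths at $u$'', but an arbitrary branching vertex need not carry even one pendant path, let alone the two your move requires: attach two fresh leaves to each leaf of $K_{1,3}$, and the original centre is a branching vertex all three of whose neighbours have degree $3$. The standard repair is to take $u$ to be a branching vertex at which at most one component of $T-u$ contains another branching vertex (equivalently, a leaf of the minimal subtree spanned by the branching vertices); such $u$ exists whenever $T$ is not a path, and then at least $\deg(u)-1\ge 2$ of the subtrees hanging at $u$ are bare paths. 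With that choice of $u$ the monotonicity you assert does go through by the case split you anticipate, and the strict increase needed for the uniqueness of $P_n$ is witnessed by the leaf at the far end of the lengthened arm.
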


Combining Theorem~\ref{bounds-on-trees} with Lemma~\ref{2-ecc}, we have the following result.

\begin{corollary}\label{combination}
If $k\geq 2$ is an integer, then  $S_{n}$ (resp.\  $P_n$) attains the minimum (resp.\ the maximum) average Steiner $k$-eccentricity in the class of trees.
\end{corollary}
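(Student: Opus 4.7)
The plan is a straightforward case split on $k$, since all the substantive work has already been carried out in the results cited just before the corollary. The corollary merely merges the known $k=2$ bound from the literature with the new bounds for $k\geq 3$, and nothing beyond invoking them is required.

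For $k = 2$, the statement is literally the content of Lemma~\ref{2-ecc}: among all trees of order $n$, the star $S_n$ minimizes and the path $P_n$ maximizes the average Steiner $2$-eccentricity. So this case needs no further argument.

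For $k \geq 3$, I would further split into two subcases according to whether $n > k$ or $n \leq k$. In the former subcase, Theorem~\ref{bounds-on-trees} applies directly: it gives the bounds $k - \frac{1}{n} \leq {\rm aecc}_{k}(T) \leq n - 1$ together with the assertion that $S_n$ attains the lower bound and $P_n$ attains the upper bound, which is precisely the claim of the corollary. In the degenerate subcase $n \leq k$, as already observed in the paragraph preceding Theorem~\ref{bounds-on-trees}, every Steiner $k$-ecc $v$-set is forced to equal $V(T)$ and every Steiner $k$-ecc $v$-tree is the entire tree $T$, so ${\rm aecc}_{k}(T) = n-1$ for every tree $T$ of order $n$; hence $S_n$ and $P_n$ trivially attain both the minimum and the maximum.

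There is no genuine obstacle here. The only subtle point worth flagging is that the corollary asks only that the extremal values be attained by $S_n$ and $P_n$ and makes no uniqueness claim, so the wording of Theorem~\ref{bounds-on-trees} (which likewise does not assert uniqueness for general $k$) is already sufficient, and no extra argument about when equality forces $T \cong S_n$ or $T \cong P_n$ is needed.
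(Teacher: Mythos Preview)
Your proposal is correct and follows essentially the same approach as the paper, which simply states the corollary as an immediate combination of Theorem~\ref{bounds-on-trees} (for $k\ge 3$) with Lemma~\ref{2-ecc} (for $k=2$) without giving a separate proof. Your extra care in handling the degenerate subcase $n\le k$ is a welcome clarification but does not depart from the paper's intended argument.
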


\section{Conclusion}\label{conclusion}

In this paper we have derived a linear-time algorithm to calculate the Steiner $k$-eccentricity of a vertex in a tree, and established lower and upper bounds for the average Steiner $k$-eccentricity of a tree. These results extend  those from~\cite{Li2020} for the case $k=3$. It remains open to determine the extremal graphs for the average Steiner $k$-eccentricity index on trees for $k\geq 2$. Moreover, the general problem to compute the Steiner $k$-eccentricity of a general graph is widely open, in particular, it is not known whether it is NP-hard.

\frenchspacing

\end{document}